
\documentclass[twoside,12pt]{article}
\usepackage{amsmath,amsthm,amssymb,amscd}
\usepackage[matrix,arrow,curve]{xy}
\usepackage[pdftex]{graphicx}
\unitlength=1mm
\title{Angle contraction between geodesics}

\author{Nikolai A. Krylov and Edwin L. Rogers}

\date {}

\newtheorem{theorem}{Theorem}
\newtheorem{corollary}{Corollary}
\newtheorem{lemma}{Lemma}

\oddsidemargin=10mm

\evensidemargin=10mm

\textwidth=145mm

\def\real            {\mathbb R}

\def\lra            {\longrightarrow}

\def\btr            {\bigtriangleup}

\begin{document}

\maketitle

\parskip=5mm

\begin{abstract}
{We consider here a generalization of a well known discrete dynamical system produced by the
bisection of reflection angles that are constructed recursively between two lines in the Euclidean plane.
It is shown that similar properties of such systems are observed when the plane is replaced by a regular
surface in $\real^3$ and lines are replaced by geodesics. An application of our results to the
classification of points on the surface as elliptic, hyperbolic or parabolic is also presented.}
\end{abstract}

\noindent {\bf Keywords}:\\ Geodesic triangles; Banach contraction principle; Gauss-Bonnet theorem \\
{\bf 2000 Mathematics Subject Classification}: 40A05; 53A05

\section{Introduction}

Let us consider the following simple problem. In the Euclidean plane
take two rays starting at point $V$ and forming an acute angle of
measure $\mu$. Denote the rays by $L_A$ and $L_B$ and take an
arbitrary transversal segment $A_1B_1$ with $A_1\in L_A$, and
$B_1\in L_B$. Keep constructing further segments with respect to the
following rule: To create a new transversal, take the angle between
the most recently constructed transversal and the corresponding ray,
and let the new transversal be the bisector of this angle. In
particular, if we have as our last transversal $B_{k-1}A_k$, we take
the bisector of $\angle B_{k-1}A_k V$, denote its intersection with
line $L_B$ by $B_k$ and use the transversal $A_kB_k$ for the next
step (see Figure 1). Clearly, this process creates not only two
sequences of points $\{A_k\} \in L_A$, $\{B_k\} \in L_B$ but also
two sequences of angles $\{\alpha_k\} $, and $\{\beta_k\} $, where
$\alpha_k=\angle B_kA_kV$, and $\beta_k=\angle A_{k+1}B_kV$.

\begin{figure*}[h]
\includegraphics[height=75mm,width=130mm]{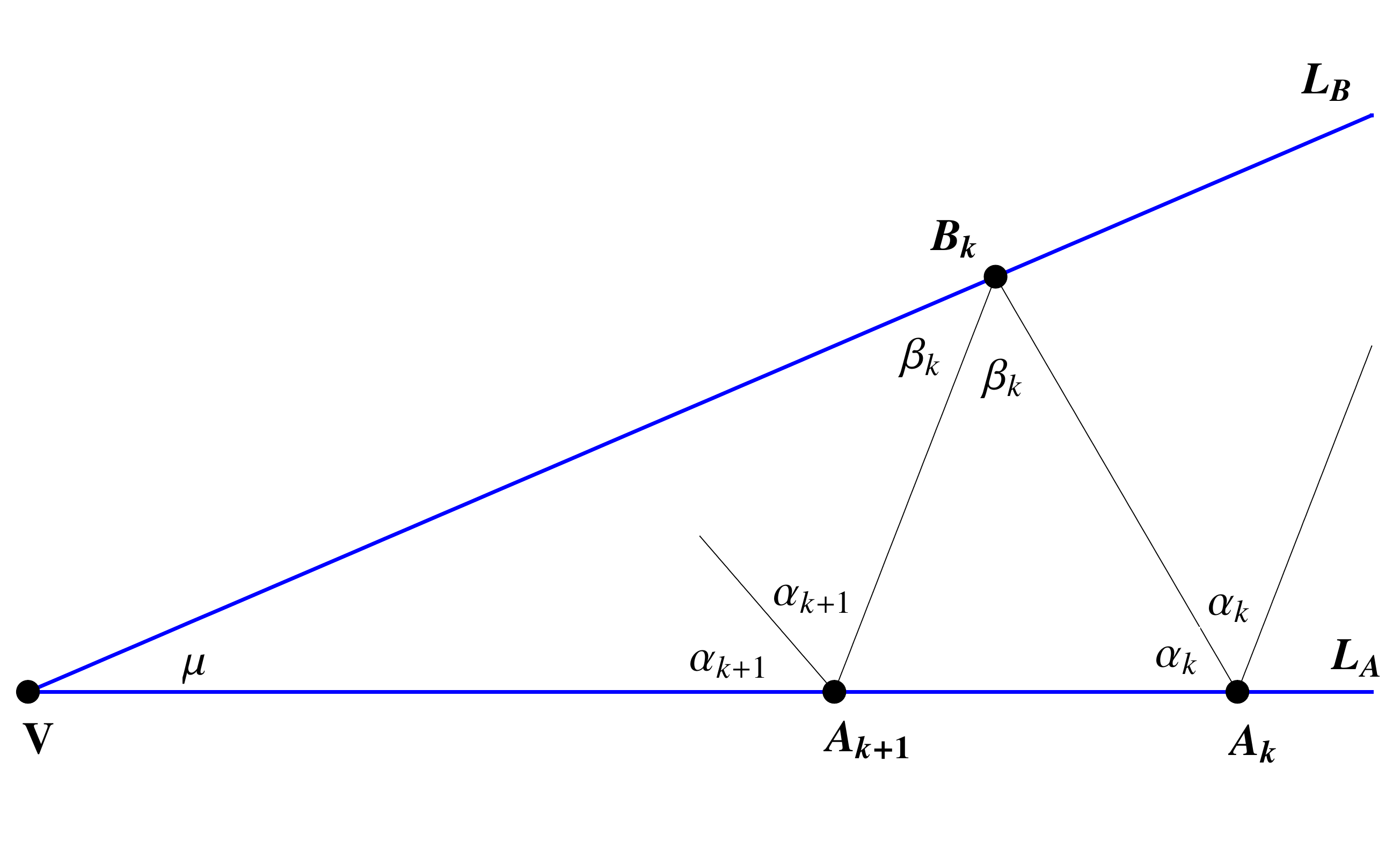}
\caption{}
\end{figure*}

\noindent By expressing each sequence of angles as a first order difference equation, it is easy to show that
$$
\lim_{k\to\infty} \alpha_k = \lim_{k\to\infty} \beta_k = \frac{\pi -
\mu}{3}
$$

\noindent irrespective of the choice of the initial transversal segment.  We show below that this property holds
under more general circumstances.  Specifically, we replace the plane with a smooth surface in
$\real^3$, where our triangles will be geodesic triangles,  and drop the condition that the angles are created by bisection.

\section{A generalization of the problem}

Let $S\subset \real^3$ be a regular surface locally parametrized by a
differentiable vector function ${\bf r}(u,v): U\lra \real^3$, with
$U$ being open in $\real^2$. All curves we consider below will be
{\it regular parametrized curves}, that is, differentiable vector functions ${\bf c}:[a,b] \lra \real^3$ with
$\dot{\bf c}(t) = {\rm d}{\bf c}(t)/{\rm d}t \ne 0$ for all values
of the parameter $t\in (a,b)$. By a {\it geodesic} on $S$ we will mean a unit
speed regular curve ${\bf c}(s)$ (parametrized by the arc length)
on $S$ such that the second derivative ${\bf c}''(s)$
is the zero vector or perpendicular to the surface.

To avoid the cases when two distinct points on $S$ can be joined by
different geodesics, we will consider only ``small" triangles formed
by two geodesics intersecting at a point $V$ and a third geodesic
intersecting the first two transversally at points $A$ and $B$. By a
``small" triangle we will mean one contained in an open subset of
$S$, which is a normal neighborhood of all its points. Proof of the
existence of such a neighborhood for each point $p\in S$ can be
found in \S 4-7 of \cite{Carmo}.

If $\gamma_v:[0,b] \lra S, \gamma_v(0)=P, \gamma_v'(0)=v$ and $\gamma_w:[0,c] \lra S, \gamma_w(0)=P$,
$\gamma_w'(0)=w$ are two geodesics at $P$, the angle between these two geodesics is defined to be the angle
between $v$ and $w$.  If $X$ is on $\gamma_v$ and $Y$ is on $\gamma_w$, we will denote this angle either by
$\angle XPY$ or by $\angle YPX$.  Assume now that we are given a ``small" triangle $\btr VAB$ on a
regular surface $S\subset \real^3$ and denote $\angle BVA$ by $\mu$, and
assume that $\mu < \pi$.

As was the case in the Euclidean plane,  we can construct first order difference equations for the angles created
by bisection, where now our transversals will be geodesic segments.  Let $A_1=A,~B_1=B$ and $\alpha_1:=
\angle VA_1B_1$. Assume also that we are given two continuous
functions $p: S\to \real^+$ and $q: S\to \real^+$ taking only positive values. To construct the point $A_2$ on the geodesic
segment $VA_1$, we consider the geodesic $\gamma_{\beta_1}:[0,1]\to S$ such
that $\gamma_{\beta_1}(0)=B_1$ and the angle between $\gamma'_{\beta_1}(0)\in
T_{B_1}(S)$ and the geodesic segment $B_1A_1$ is
$$
\beta_1= \frac{\angle VB_1A_1}{1+q(B_1)}.
$$

Geodesic $\gamma_{\beta_1}([0,a))$ (with $a$ large enough) will
intersect transversally the segment $VA_1$ at a point, which we
denote by $A_2$. Then we use function $p:S\to \real^+$ and divide
angle $\angle B_1A_2V$ by the geodesic $\gamma_{\alpha_2}:[0,1]\to S$ to
create the point $B_2$ on segment $VB_1$, where $\gamma_{\alpha_2}(0)=A_2$ and
the angle between $\gamma'_{\alpha_2}(0)\in T_{A_2}(S)$ and segment $A_2V$ is
defined by
$$
\alpha_2= \frac{\angle VA_2B_1}{1+p(A_2)}.
$$

Point $B_2$ is defined as the intersection of $VB_1$ with geodesic
$\gamma_{\alpha_2}([0,a))$ (see Figure 2, where we abbreviated $p(A_m)$ and
$q(B_n)$ as $p_m$ and $q_n$ respectively).

\begin{figure*}[h]
\includegraphics[height=75mm,width=140mm]{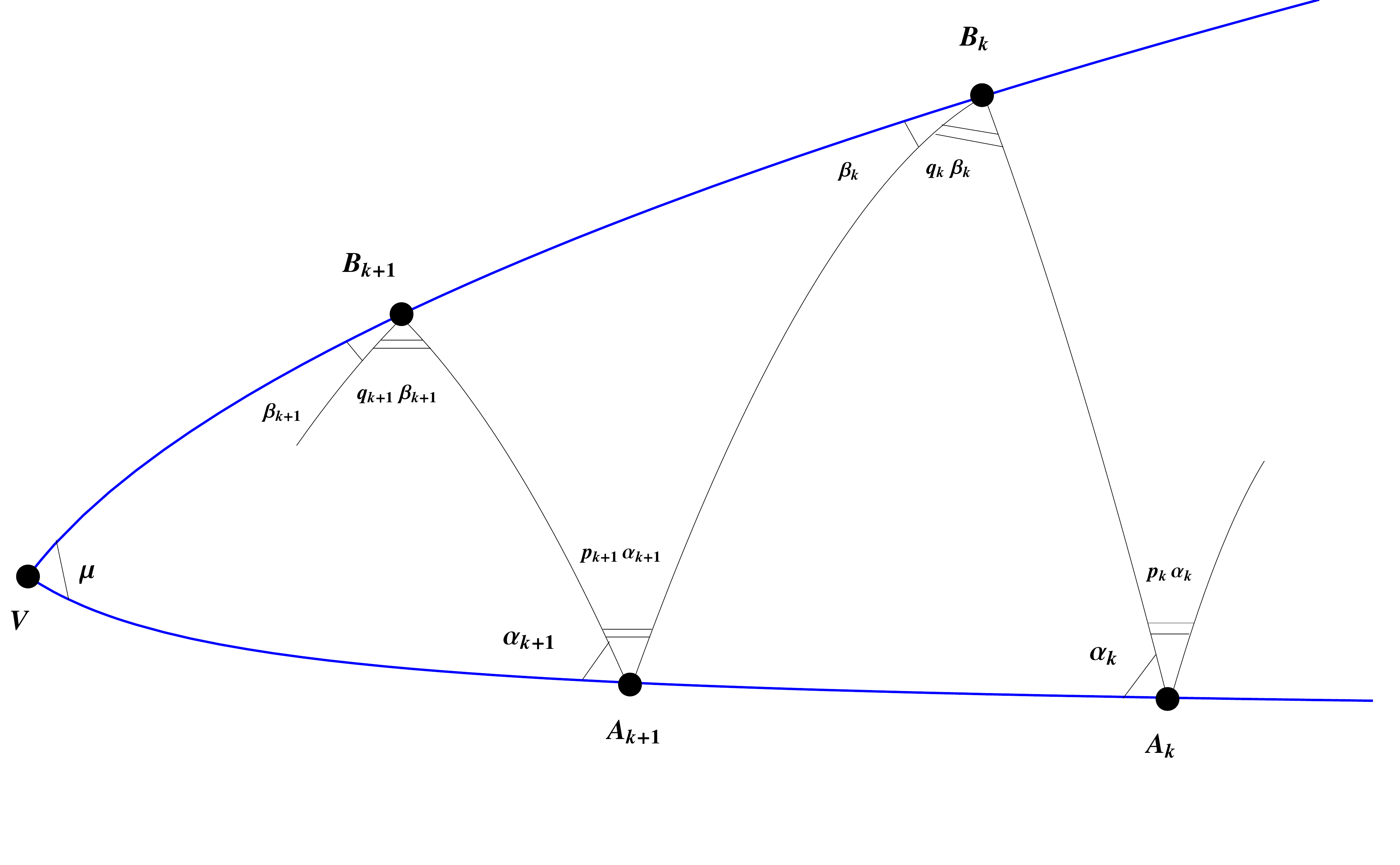}
\caption{}
\end{figure*}

\noindent Then we iterate this procedure to construct two sequences
of points $\{A_k\}_{k=1}$, $\{B_k\}_{k=1}$, and angles
$$
\alpha_{k+1} =\frac{\angle VA_{k+1}B_k}{1+p(A_{k+1})},~~~~~{\rm
and}~~~~~\beta_k =\frac{\angle VB_kA_k}{1+q(B_k)}
$$

Here arises a natural question. Do the sequences
$\{\alpha_n\}$ and $\{\beta_n\} $ converge, and if so,
to what values? In the next section we will prove the following
\begin{theorem}
For an arbitrary small triangle $\btr VAB$ on a regular surface $S$
in $\real^3$ with the angle $\angle AVB = \mu < \pi$ and two
continuous positive functions $p:S\lra \real^+$ and $q:S\lra
\real^+$, we have
$$
\lim_{n\to\infty} \alpha_n = \frac{q(V)\cdot(\pi -
\mu)}{p(V)+q(V)+p(V)\cdot q(V)}
$$ and
$$
\lim_{n\to\infty} \beta_n = \frac{p(V)\cdot(\pi -
\mu)}{p(V)+q(V)+p(V)\cdot q(V)}
$$
\end{theorem}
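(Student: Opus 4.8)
The plan is to convert the geometric construction into a scalar affine recursion for the angles and then to invoke a contraction argument. The one invariant feature is that the angle at $V$ is always $\mu$: since every $A_k$ lies on the fixed geodesic $VA$ and every $B_k$ on $VB$, the angle $\angle A_kVB_k$ equals the angle between these two geodesics at $V$, namely $\mu$. The essential tool is the Gauss--Bonnet theorem applied to the geodesic triangles. For $\btr VA_kB_k$ it gives $\mu+\alpha_k+\angle VB_kA_k=\pi+\delta_k$, where $\delta_k=\iint_{\btr VA_kB_k}K\,dA$; dividing by $1+q(B_k)$ turns the defining relation into $\beta_k=(\pi-\mu-\alpha_k+\delta_k)/(1+q(B_k))$. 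Applying Gauss--Bonnet to $\btr VA_{k+1}B_k$, whose angle at the vertex $B_k$ is precisely the newly created angle $\beta_k$, gives in the same way $\alpha_{k+1}=(\pi-\mu-\beta_k+\epsilon_k)/(1+p(A_{k+1}))$, with $\epsilon_k=\iint_{\btr VA_{k+1}B_k}K\,dA$.

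Eliminating $\beta_k$ between these two identities yields a single first order affine recursion
$$
\alpha_{k+1}=c_k\,\alpha_k+d_k,\qquad c_k=\frac{1}{(1+p(A_{k+1}))(1+q(B_k))},
$$
where $d_k$ collects the term $(\pi-\mu)q(B_k)$ together with curvature contributions built from $\delta_k,\epsilon_k$, all divided by $(1+p(A_{k+1}))(1+q(B_k))$. Writing $P=p(V)$, $Q=q(V)$ and $M=\pi-\mu$, I claim $c_k\to c:=1/((1+P)(1+Q))$ and $d_k\to d:=MQ/((1+P)(1+Q))$. This needs two inputs: that $A_k\to V$ and $B_k\to V$, so that $p(A_{k+1})\to P$ and $q(B_k)\to Q$ by continuity, and that $\delta_k,\epsilon_k\to 0$. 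The latter follows from the former, since all the triangles lie in one fixed compact set on which $K$ is bounded while their areas shrink to zero as they collapse onto $V$; hence $|\delta_k|$ and $|\epsilon_k|$, bounded by $\sup|K|$ times those areas, tend to $0$.

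Granting these limits, $c$ satisfies $0<c<1$, so the limiting map $x\mapsto cx+d$ is a contraction of $\real$ with unique fixed point $\alpha^{\ast}=d/(1-c)$; the estimate $|\alpha_{k+1}-\alpha^{\ast}|\le|c_k|\,|\alpha_k-\alpha^{\ast}|+|c_k-c|\,|\alpha^{\ast}|+|d_k-d|$, together with the facts that $|c_k|$ is eventually bounded by some $c'<1$ and that the perturbation terms tend to $0$, forces $\alpha_k\to\alpha^{\ast}$. Computing $\alpha^{\ast}=MQ/((1+P)(1+Q)-1)=MQ/(P+Q+PQ)$ reproduces the first formula, and feeding it back into $\beta_k=(\pi-\mu-\alpha_k+\delta_k)/(1+q(B_k))$ gives $\beta_k\to MP/(P+Q+PQ)$, the second. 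The main obstacle is the geometric claim that $A_k\to V$ and $B_k\to V$. Here I would argue that the arclengths $|VA_k|$ and $|VB_k|$ strictly decrease, since each new vertex lies strictly between $V$ and the previous one (because $0<\beta_k<\angle VB_kA_k$ and $0<\alpha_{k+1}<\angle VA_{k+1}B_k$), so both are convergent; a nonzero limit would yield a nondegenerate limiting triangle invariant under the construction, which is impossible because the prescribed angle is a proper fraction $1/(1+q)<1$ of the full angle, and a one-sided collapse is ruled out by the resulting contradiction between the limiting values of $\alpha_k$. Hence both vertex sequences converge to $V$, closing the argument.
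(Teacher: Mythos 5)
Your main-line argument coincides with the paper's. The paper likewise converts the construction into a perturbed affine recursion via Gauss--Bonnet and finishes with a perturbed-contraction argument; it invokes a generalized Banach contraction principle (if $\mathrm{dist}(\alpha_{k+1},T(\alpha_k))\leq\varepsilon_k\to 0$, then $\alpha_k$ converges to the fixed point of $T$), which is exactly the content of your elementary estimate $|\alpha_{k+1}-\alpha^{\ast}|\leq |c_k|\,|\alpha_k-\alpha^{\ast}|+|c_k-c|\,|\alpha^{\ast}|+|d_k-d|$. The only structural difference is the choice of triangles: the paper applies Gauss--Bonnet to $\btr A_kB_kA_{k+1}$ and $\btr A_kB_kV$, you to $\btr VA_kB_k$ and $\btr VA_{k+1}B_k$; these are equivalent, the paper's first identity being the difference of your two. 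The paper's choice does buy one small thing: the triangles $\btr A_kB_kA_{k+1}$ are pairwise disjoint inside $\btr VAB$, so $\sum_k \iint_{A_kB_kA_{k+1}}|K|\,dS\leq \iint_{ABV}|K|\,dS<\infty$ and those curvature terms tend to zero with no appeal to the lemma, whereas in your version every curvature term is routed through the convergence $A_k,B_k\to V$.

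The genuine divergence --- and the thin spot in your proposal --- is the lemma that $A_k\to V$ and $B_k\to V$, which is the technical heart of the paper (its Lemma 1). The paper argues via the exponential map: if $B_k\to P\neq V$, then the chords satisfy $L(\tilde B_{n+1},\tilde B_n)>c\sin(p(A_n)\alpha_n)$ while $L(\tilde B_{n+1},\tilde B_n)\to 0$ because $\{B_k\}$ is Cauchy, forcing $\alpha_n\to 0$ and then, through a circular-sector area bound and formula for $\beta_k$, the contradiction $\mu=\pi$. Your alternative (monotone convergence to limits $P_A$, $P_B$, then a case analysis) can be made to work and is arguably more elementary, but as written two steps are only gestured at. First, ``a nondegenerate limiting triangle invariant under the construction'' is not an argument; the precise version is a ratio argument: by continuity of angles in a normal neighborhood, both $\angle VB_kA_k=(1+q(B_k))\beta_k$ and $\angle VB_kA_{k+1}=\beta_k$ converge to $\angle VP_BP_A$, and since $1+q(B_k)\to 1+q(P_B)>1$ this forces $\angle VP_BP_A=0$, impossible because $P_A$ would then lie on the geodesic through $P_B$ and $V$, which meets $VA$ only at $V$ inside a normal neighborhood. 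Second, the two one-sided collapses are not interchangeable: if $A_k\to V$ and $B_k\to P_B\neq V$, your two identities do give conflicting limits $\pi-\mu$ and $(\pi-\mu)/(1+p(V))$ for $\alpha_{k+1}$; but if $B_k\to V$ and $A_k\to P_A\neq V$, then $\alpha_k\to 0$ with no contradiction, and the contradiction must come from $\beta_k$ instead (limits $\pi-\mu$ versus $(\pi-\mu)/(1+q(V))$). Both cases also use the true but unstated fact that the area of a geodesic triangle collapsing onto a segment (not onto the point $V$) tends to zero. With these justifications supplied, your route closes.
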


~

\begin{corollary} If, in addition to the conditions of the theorem we
assume that functions $p$ and $q$ are equal, then
$$
\lim_{n\to\infty} \alpha_n = \lim_{n\to\infty} \beta_n = \frac{\pi -
\mu}{2+p(V)}
$$
\end{corollary}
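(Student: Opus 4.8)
The plan is to derive the Corollary directly from the Theorem by substituting the hypothesis $p = q$ into the two limit formulas and simplifying; no new geometric input is needed, since the Theorem already supplies both limits in closed form. Because $p$ and $q$ are assumed equal as functions on $S$, they agree in particular at the vertex, so $p(V) = q(V)$. I would set $c := p(V) = q(V)$ and record at the outset that $c > 0$, since $p$ takes values in $\real^+$; this strict positivity is exactly what will permit the cancellation of a common factor at the end.

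Next I would substitute $c$ for both $p(V)$ and $q(V)$ in the expression the Theorem gives for $\lim_{n\to\infty}\alpha_n$. The denominator $p(V) + q(V) + p(V)\cdot q(V)$ becomes $c + c + c^2 = c(2 + c)$, while the numerator $q(V)\cdot(\pi - \mu)$ becomes $c(\pi - \mu)$. Cancelling the nonzero factor $c$ then yields $\lim_{n\to\infty}\alpha_n = (\pi - \mu)/(2 + c) = (\pi - \mu)/(2 + p(V))$. Carrying out the identical reduction for $\lim_{n\to\infty}\beta_n$, whose numerator $p(V)\cdot(\pi - \mu)$ likewise collapses to $c(\pi - \mu)$ over the same denominator $c(2+c)$, produces the same value, so the two limits coincide and equal the asserted common quantity.

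There is no genuine obstacle here: once the Theorem is available, the Corollary is pure algebra, and the single point deserving a word of justification is that $c \neq 0$, which is immediate from the hypothesis that $p$ is strictly positive. The conceptual content worth highlighting is that the asymmetry between the $\alpha$- and $\beta$-limits in the Theorem is governed entirely by the ratio $q(V) : p(V)$, since $\lim \alpha_n / \lim \beta_n = q(V)/p(V)$; imposing $p = q$ removes this asymmetry and collapses both formulas to the single symmetric value $(\pi - \mu)/(2 + p(V))$.
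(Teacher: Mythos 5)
Your proof is correct and matches the paper's intent exactly: the paper states this corollary without proof, treating it as the immediate algebraic consequence of Theorem 1 that you spell out, namely substituting $p(V)=q(V)=c>0$ and cancelling the common factor $c$ in $\frac{c(\pi-\mu)}{c(2+c)}$. Nothing further is needed.
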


~

Recall that a point $x\in S$ is called elliptic when the Gauss curvature $K(x)>0$, hyperbolic when $K(x)<0$ and parabolic when
$K(x)=0$. Our next corollary shows that there exist functions $p(x)$ and $q(x)$ so that the limits of our sequences of angles
will distinguish the type of the point $V$.

\begin{corollary}
Let $k_1(x)$ and $k_2(x)$ denote two principal curvatures at $x\in
S$. If
$$
p(x)= 1+ |K(x)\cdot(k_1(x)+k_2(x))|~~~{\rm and}~~~q(x)= 1+
|K(x)|\cdot(|k_1(x)|+|k_2(x)|)
$$
then the pairs $(\lim\alpha_n,~\lim\beta_n)$ are different for
different type of points.
\end{corollary}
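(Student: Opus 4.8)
The plan is to observe that the limiting pair in Theorem~1 depends only on the two numbers $P:=p(V)$ and $Q:=q(V)$, so the whole corollary reduces to computing and comparing these two values at the vertex. Writing $k_1=k_1(V)$, $k_2=k_2(V)$ and using $K(V)=k_1k_2$, I would first rewrite
$$
P=1+|K(V)|\cdot|k_1+k_2|,\qquad Q=1+|K(V)|\cdot\bigl(|k_1|+|k_2|\bigr),
$$
so that the only distinction between $P$ and $Q$ is $|k_1+k_2|$ versus $|k_1|+|k_2|$.

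The crux is then the equality case of the triangle inequality: $|k_1+k_2|=|k_1|+|k_2|$ holds exactly when $k_1$ and $k_2$ share a sign or at least one of them vanishes, with strict inequality otherwise. Splitting according to curvature type gives three clean outcomes. If $V$ is \emph{parabolic} ($K(V)=0$), the factor $|K(V)|$ annihilates both correction terms, so $P=Q=1$. If $V$ is \emph{elliptic} ($K(V)>0$, whence $k_1,k_2$ share a sign), the equality case applies and $P=Q>1$. If $V$ is \emph{hyperbolic} ($K(V)<0$, whence $k_1,k_2$ have opposite signs and are both nonzero), the strict inequality forces $1\le P<Q$.

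Substituting into Theorem~1, I would track the $\mu$-independent ratio
$$
\frac{\lim_{n\to\infty}\alpha_n}{\lim_{n\to\infty}\beta_n}=\frac{Q}{P}.
$$
This ratio equals $1$ at elliptic and parabolic points but is strictly greater than $1$ at hyperbolic points, so the hyperbolic case is singled out as the unique type with $\lim\alpha_n\ne\lim\beta_n$ (in fact $\lim\alpha_n>\lim\beta_n$). To separate the two remaining on-diagonal cases I would invoke Corollary~1: since $P=Q$ there, the common limit equals $\dfrac{\pi-\mu}{2+P}$, which is exactly $\dfrac{\pi-\mu}{3}$ at a parabolic point ($P=1$) and strictly smaller at an elliptic point ($P>1$). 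Hence the three types yield three pairwise-distinct pairs.

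The step I expect to demand the most care is the elliptic-versus-parabolic distinction, since both pairs lie on the diagonal and their common value carries the factor $\pi-\mu$. The clean resolution is to note that $\mu$ is fixed by the given triangle, so the common limit recovers $P$ through $P=\frac{\pi-\mu}{\lim\alpha_n}-2$, and $P=1$ characterizes parabolicity while $P>1$ characterizes ellipticity; equivalently, one simply records that the elliptic diagonal value lies strictly below the parabolic value $\frac{\pi-\mu}{3}$. Everything else is routine sign bookkeeping with the triangle inequality followed by direct substitution into Theorem~1.
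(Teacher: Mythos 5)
Your proposal is correct and matches the paper's argument: the paper's proof simply states that ``straightforward computations'' yield $\lim\alpha_n=\lim\beta_n=\frac{\pi-\mu}{3}$ at parabolic points, $\lim\alpha_n=\lim\beta_n<\frac{\pi-\mu}{3}$ at elliptic points, and $\lim\alpha_n\neq\lim\beta_n$ at hyperbolic points, which is exactly the trichotomy you derive. Your write-up supplies the details the paper omits (the triangle-inequality equality case for $p(V)$ versus $q(V)$ and the substitution into Theorem~1 and Corollary~1), but the approach is essentially the same.
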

\begin{proof}
Straightforward computations show that in the case of a parabolic,
elliptic, and hyperbolic points correspondingly, we have
$$
~~\lim\alpha_n=\lim\beta_n =
\frac{\pi-\mu}{3},~~\lim\alpha_n=\lim\beta_n < \frac{\pi-\mu}{3},
~~\lim\alpha_n\neq \lim\beta_n.
$$
\end{proof}

\section{Proof of the theorem}

The proof of  Theorem 1 relies on a generalization of the Banach
contraction mapping principle and a special case of the local
version of the Gauss-Bonnet Theorem. Let us first recall the
contraction principle.

\noindent {\bf The Contraction Theorem} Consider a complete metric
space $(M,{\rm dist})$. Let $T: M\to M$ be a contraction mapping
with Lipschitz constant $k\in (0,1)$, and suppose $\alpha \in M$ is
the fixed point of $T$. Let $\{\varepsilon_n\}$ be a sequence
of positive numbers converging to zero, and suppose
$\{\alpha_n\}\subseteq M$ satisfies:
$$
{\rm dist}(\alpha_{n+1},~T(\alpha_n))\leq \varepsilon_n.
$$ Then the sequence $\{\alpha_n\}$ converges to
$\alpha$.
\noindent We refer the reader to chapter 3 of \cite{Khamsi} for a
proof of this theorem.

\begin{proof}[Proof of Theorem 1]
Let's apply the Gauss-Bonnet Theorem to geodesic triangles
$A_kB_kA_{k+1}$ and $A_kB_kV$.  Using notations from figure 2, one
will get the following:
$$
\iint\limits_{A_kB_kA_{k+1}} K~dS = \alpha_k + q(B_k)\cdot\beta_k +
(\pi-(1+p(A_{k+1}))\cdot\alpha_{k+1}) - \pi,
$$
which is equivalent to the formula
\begin{equation}\label{aba}
\alpha_{k+1} = \frac{\alpha_k}{1+p(A_{k+1})} +
\frac{q(B_k)\cdot\beta_k}{1+p(A_{k+1})} -
\frac{1}{1+p(A_{k+1})}\cdot \iint\limits_{A_kB_kA_{k+1}} K~dS,
\end{equation} and similarly
$$
\iint\limits_{A_kB_kV} K~dS = \alpha_k + (1+q(B_k))\cdot\beta_k +
\mu - \pi,
$$
which gives $\beta$ in terms of $\alpha$ and $\mu$:
\begin{equation}\label{abo}
\beta_k = \frac{\pi- \mu}{1+q(B_k)} - \frac{\alpha_k}{1+q(B_k)} +
\frac{1}{1+q(B_k)}\cdot \iint\limits_{A_kB_kV} K~dS.
\end{equation}

\noindent Substituting (\ref{abo}) into (\ref{aba}) one easily
obtains

\(
\alpha_{k+1} =\\ ~ \\
\frac{\alpha_k + q(B_k)\cdot(\pi-\mu)}{(1+p(A_{k+1}))\cdot(1+q(B_k))} +
\frac{q(B_k)}{(1+p(A_{k+1}))\cdot(1+q(B_k))}\cdot
\iint\limits_{A_kB_kV} K~dS - \frac{1}{1+p(A_{k+1})}\cdot
\iint\limits_{A_kB_kA_{k+1}} K~dS
\)

\noindent Since $p(X)>0$ and $q(X)>0$ for an arbitrary point $X\in
S$, the map
$$
T(\varphi) : =  \frac{1}{(1+p(V))\cdot(1+q(V))}\cdot\varphi +
\frac{q(V)\cdot(\pi-\mu)}{(1+p(V))\cdot(1+q(V))}
$$ will be a contraction with Lipschitz constant $1/[(1+p(V))\cdot(1+q(V))] \in (0,1)$.
Its fixed point $\alpha$ can be found by a straight forward
computation
$$
\alpha=\frac{q(V)\cdot(\pi - \mu)}{p(V)+q(V)+p(V)\cdot q(V)}.
$$
To apply the contraction theorem for this map $T$, we set $M=\real$
with the standard distance function ${\rm dist}(x,y):=|x-y|$. Then
it follows  that \\ ~ \\
$\varepsilon_k := |\alpha_{k+1} - T(\alpha_k)| =
|\frac{\alpha_k+q(B_k)\cdot(\pi-\mu)}{(1+p(A_{k+1}))\cdot(1+q(B_k))}
- T(\alpha_k)+
\frac{q(B_k)}{(1+p(A_{k+1}))\cdot(1+q(B_k))}\cdot
\iint\limits_{A_kB_kV} K dS -\\ \frac{1}{1+p(A_{k+1})}\cdot
\iint\limits_{A_kB_kA_{k+1}} K dS ~|
$

Thus, to prove our theorem for the sequence $\{\alpha_n\}$, it
is enough to show that $\{\varepsilon_k\} \to 0$ when
$k\to\infty$. For this we will show that the difference of two
fractions and each of the double integrals in the formula for
$\varepsilon_k$ converge to zero. The Gaussian curvature of a
regular surface is a smooth function, and hence it is universally
bounded on a small triangle. Since the union of all triangles
$\btr A_kB_kA_{k+1}$ is contained in the small triangle $\btr VAB$, we must
have
$$
\sum\limits_{k=1}^{\infty}~ \iint\limits_{A_kB_kA_{k+1}} \left |K\right |~dS <
\iint\limits_{ABV}  \left |K\right |~dS <\infty,
$$ and hence the general term of the series converges to zero.
As for the integrals over the triangles $A_kB_kV$, we first notice
that both sequences $\{A_k\}$ and $\{B_k\}$ converge to the vertex
$V$ (see Lemma 1 below). Therefore length of each of three
geodesics $L(A_k,V)$, $L(B_k,V)$ and $L(A_k,B_k)$ approaches zero,
which implies that the regions in $\real^2$ corresponding to the
triangles $\btr A_kB_kV$ get shrunk to a point. Hence

$$
 \iint\limits_{A_kB_kV} K~dS  \stackrel{k\to\infty}{\lra} 0.
$$
Convergence to zero of
$$
\frac{\alpha_k+q(B_k)\cdot(\pi-\mu)}{(1+p(A_{k+1}))\cdot(1+q(B_k))}
- T(\alpha_k)
$$
follows from the lemma since both functions $p ,
q:S\lra \real^+$ are continuous. Thus, $\varepsilon_k \lra 0$, when
$k\to\infty$, and hence our result for $\{\alpha_k\}$. To finish the
proof take the limit as $k\to\infty$ of each side in formula
($\ref{abo}$) to obtain the desired answer for the sequence
$\{\beta_k\}$.
\end{proof}

\begin{lemma}
$
\lim\limits_{k\to\infty} B_k = \lim\limits_{k\to\infty} A_k = V
$
\end{lemma}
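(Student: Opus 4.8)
The plan is to exploit the monotonicity built into the construction together with a limiting‑configuration argument. First I would observe that every $A_k$ lies on the single geodesic segment $\gamma_A$ from $V$ through $A_1$, and every $B_k$ on the segment $\gamma_B$ from $V$ through $B_1$, since each $A_{k+1}$ is defined as a point of $VA_k\subset\gamma_A$ and each $B_{k+1}$ as a point of $VB_k\subset\gamma_B$. Writing $a_k:=L(A_k,V)$ and $b_k:=L(B_k,V)$ for the arc‑length distances, I would check that $A_{k+1}$ lies strictly between $V$ and $A_k$: the geodesic issuing from $B_k$ makes the reduced angle $\beta_k=\angle VB_kA_k/(1+q(B_k))$, which is strictly smaller than $\angle VB_kA_k$ because $q>0$, so it enters the interior of $\btr VA_kB_k$ and must cross the opposite side $VA_k$ at an interior point. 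Hence $\{a_k\}$ and $\{b_k\}$ are strictly decreasing and bounded below by $0$, so they converge to some $a_*,b_*\ge 0$, and correspondingly $A_k\to A_*$ and $B_k\to B_*$ along $\gamma_A,\gamma_B$.

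The heart of the proof is to show $a_*=b_*=0$. I would argue by contradiction, supposing $A_*\neq V$ (the case $B_*\neq V$ being symmetric). Within the normal neighborhood, geodesics joining nearby points depend continuously on their endpoints, as do their initial directions, so the whole construction is continuous in the pair $(A_k,B_k)$: in particular $\angle VB_kA_k\to\angle VB_*A_*$ and, since $q$ is continuous, $\beta_k\to\beta_*=\angle VB_*A_*/(1+q(B_*))$. Consequently the intersection point $A_{k+1}$ of the $\beta_k$‑geodesic with $VA_k$ converges to the intersection point $A_*'$ of the limiting $\beta_*$‑geodesic from $B_*$ with $VA_*$.

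I would then isolate the key geometric fact: because $0<\beta_*<\angle VB_*A_*$, the limiting geodesic enters the interior of the nondegenerate triangle $\btr VA_*B_*$ and therefore meets the side $VA_*$ strictly between $V$ and $A_*$, giving $L(A_*',V)<a_*$. But $A_{k+1}\to A_*$ as well (being the shifted tail of the convergent sequence $\{A_k\}$), so $A_*'=A_*$ and hence $L(A_*',V)=a_*$, a contradiction. Therefore $a_*=0$, i.e.\ $A_k\to V$, and symmetrically $B_k\to V$.

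The main obstacle I anticipate is making the ``strictly between'' and continuity claims fully rigorous, and dealing with the degenerate limits in which one of $A_*,B_*$ already equals $V$ while the other might not. Both issues are resolved inside a normal neighborhood, where any two points are joined by a unique geodesic varying smoothly with its endpoints and where a ``small'' triangle bounds an honest disk; the degenerate sub‑case ($A_*=V$, $B_*\neq V$) is handled by the same continuity argument applied to the construction of $B_{k+1}$ from $A_{k+1}$, whose governing angle $\alpha_{k+1}$ stays bounded away from $0$, so that the next point is forced onto $\gamma_B$ arbitrarily close to $V$. I would cite the normal‑neighborhood results of \S4‑7 of \cite{Carmo} for these facts.
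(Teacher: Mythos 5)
Your proposal is correct in outline, but it takes a genuinely different route from the paper's proof. The paper also starts from monotone convergence of $\{A_k\}$, $\{B_k\}$ and argues by contradiction, but its engine is the Gauss--Bonnet theorem: assuming $B_k\to P\neq V$, the points $A_n$ stay at distance $>c$ from the $B_m$'s; pulling back by $\exp_{A_n}$, the shrinking chords $B_nB_{n+1}$ force $c\cdot\sin(p(A_n)\alpha_n)\to 0$, hence $\alpha_n\to 0$ (and then $\beta_n\to 0$); a circular-sector bound kills $\iint_{A_kB_kV}K\,dS$; and passing to the limit in the Gauss--Bonnet identity (\ref{abo}) yields $\pi-\mu=0$, contradicting $\mu<\pi$. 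Your argument uses no curvature input at all: it is a pure limiting-configuration/continuity argument, showing that the limit of the iteration would have to be a fixed point of the limiting construction step, while that step strictly decreases distance to $V$ (the $\beta_*$-geodesic meets $VA_*$ at an interior point). What each buys: yours is more elementary and more portable --- it works verbatim in any totally normal (convex) neighborhood of any Riemannian surface, needing only smooth dependence of geodesics on data and the at-most-one-crossing property; the paper's reuses the Gauss--Bonnet formula it needs anyway for Theorem 1 and extracts the quantitative facts $\alpha_n,\beta_n\to 0$ along the way. One caveat about your degenerate sub-case ($A_*=V$, $B_*\neq V$): the operative point is not that $\alpha_{k+1}$ stays bounded away from $0$, but that the limiting geodesic emanates from $V$ itself and crosses $\gamma_B$ \emph{transversally at} $V$, because the limiting angle $(\pi-\mu)/(1+p(V))$ is strictly less than $\pi-\mu$ (here $p(V)>0$ is used); continuity of transversal intersections then forces $B_{k+1}\to V$, contradicting $B_{k+1}\to B_*\neq V$. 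With that adjustment (and its mirror case $A_*\neq V$, $B_*=V$), your case analysis is complete and the proof goes through.
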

\begin{proof}
Suppose we are given a geodesic segment $PQ\subset S$. As above, we
will designate here the length of such a segment by $L(P,Q)$. Since
our triangle $\btr B_1VA_1$ is a small one, it is easy to see that
the sequence of points $\{B_k\}$ converges to a point, $P$, on
the geodesic segment $B_1V$. Assume that $P \not= V$. Then $\forall\varepsilon
>0$, sufficiently small, $~\exists N$, a large enough natural number, such that $\forall
k\geq N$, $B_k$ will be in a neighborhood of radius
$\varepsilon$ centered at $P$ and contained in $exp_p(D_{\varepsilon})$.  Further,
we assume that $\varepsilon$ is so small that
$exp_p(D_{\varepsilon})$ has no points in common with the segment
$A_1V$. This implies that $\exists  c>0$ such that for all large
enough $n,~m$ we also have $L(A_n,B_m)>c$. Consider a regular
parametrization $\gamma(t):~[0,\theta]\lra B_1V$ of the geodesic
$B_1V$ such that $\gamma(0) = B_1$, $\gamma(\theta) = V$ and $ {\rm
d}\gamma(t)/{\rm d}t \ne 0,~ \forall t\in (0,\theta).$ Let us denote
by $t_k\in[0,\theta]$, $t_1=0$, the parameter value corresponding to the point
$B_k\in B_1V$, i.e. $\gamma(t_k) = B_k$.  Since
the sequence $\{B_k\}$ converges and $\gamma(t)$ is smooth,
the corresponding sequence of parameters $\{t_k\}$ will
converge as well.

As the next step, we use the exponential map to show that
existence of the limit point $P$ with $L(P,V)>0$ implies the
convergence to zero of both sequences $\{\alpha_k\}$ and
$\{\beta_k\}$. Assume that $n$ is large enough and recall that
$exp_{A_n}:~T_{A_n}S \lra S$ is a diffeomorphism locally and its
image contains our small triangle $\btr ABV$. In particular, we can
consider the preimage of the two geodesics segments  $B_nA_n$, and
$A_nB_{n+1}$, which will be line segments in $T_{A_n}S$. We
denote those lines by $(\tilde{B}_nA_n)$ and $(A_n\tilde{B}_{n+1})$
respectively(i.e. $exp^{-1}_{A_n}(B_n) = \tilde{B}_n$). Since
$exp_{A_k}$ preserves the angles and lengths of rays through the
point $A_k$, we have that
$$
\angle \tilde{B}_nA_n\tilde{B}_{n+1} = \angle B_nA_nB_{n+1} =
p(A_n)\cdot\alpha_n ~~~{\rm
and}~~~|A_n\tilde{B}_{n+i}|=L(A_n,B_{n+i})> c,
$$
where $i\in\{0,1\}$. We also have the following inequalities:
$$
L(\tilde{B}_{n+1},\tilde{B}_n)\geq {\rm
dist}[\tilde{B}_{n+1},(A_n\tilde{B}_n)]> c\cdot{\rm
sin}(p(A_n)\cdot\alpha_n),
$$
where $L(\tilde{B}_{n+1},\tilde{B}_n)$ denotes the length of the
curve $exp^{-1}_{A_n}[(B_{n+1}B_n)]\subset T_{A_n}S$ and can be
computed by the integral
$$
L(\tilde{B}_{n+1},\tilde{B}_n) = \int\limits^{t_{n+1}}_{t_n}
|(exp^{-1}_{A_n} \circ \gamma)'(u)|~du.
$$
Since the composition $exp^{-1}_{A_n} \circ \gamma(t)$ is smooth,
there exists a positive constant $K$ such that for all large enough
$n$, we have
$$
K\cdot|t_{n+1} - t_n|\geq L(\tilde{B}_{n+1},\tilde{B}_n)> c\cdot{\rm
sin}(p(A_n)\cdot\alpha_n),
$$ which implies that the sequence of angles $\{\alpha_n\}$ converges
to zero. Recalling (\ref{abo}) we conclude that the sequence $\{\beta_n\}$ converges to zero
as well.

Now let's look at the triangle $\btr A_kB_kV$ and consider the
geodesic through $A_k$ and a point $Z$ on the geodesic segment
$VB_k$, including both end points. Since $Z$ belongs to a closed
and bounded curve, there is such geodesic $A_kZ$ of maximal length.
Let's denote this length by $L_k$, that is
$$
L_k:= \max\limits_{Z\in B_kV}\left \{L(A_k,Z)\right \}.
$$
Now we take the circular sector of radius $L_k$, denoted by $\btr
S(A_k,L_k)$,  centered at $A_k$
and bounded between two geodesics $A_kV$ and $A_kB_k$ with angle
$\alpha_k$ at $A_k$.  Clearly, $\btr A_kB_kV\subset \btr S(A_k,L_k)$, and
therefore
$$
\iint\limits_{A_kB_kV} \left |K\right |~dS \leq \iint\limits_{\btr S(A_k,L_k)}
\left |K\right |~dS.
$$
Since the sequence $\{L_k\}$ is certainly bounded and
$\{\alpha_k\}\lra 0$, when $k\to\infty$, the areas of $\btr
S(A_k,L_k)$ will approach zero, which implies that
$$
\iint\limits_{A_kB_kV} K~dS \stackrel{k\to\infty}{\lra} 0.
$$
Now one can use formula (\ref{abo}) to deduce that $\mu=\pi$, which
will contradict the assumption that $\mu<\pi$ in the triangle $\btr
ABV$. A similar argument for  $\{A_k\}$ also  results in a
contradiction.
\end{proof}

\parskip=1mm

\noindent Siena College, School of Science\\
515 Loudon Road, Loudonville NY 12211

\noindent {\small nkrylov@siena.edu {\it and} rogers@siena.edu}

\end{document}